\def\i{{\rm i}}
\newtheorem{theorem}{Theorem}[section]
\newtheorem{lemma}[theorem]{Lemma}
\def\myspace{\vskip 10pt}
\newcommand\marginal[1]{\marginpar{\raggedright\parindent=0pt\tiny #1}}
\newcommand\SJ{\marginal{SJ}}
\newcommand\JF{\marginal{JF}}
\newcommand\MDW{\marginal{MDW}}
\newcommand{\sfrac}[2]{{\textstyle\frac{#1}{#2}}}
\title{Partitions with Distinct Multiplicities of Parts:
On An ``Unsolved Problem'' Posed By Herbert Wilf}
\author{James Allen Fill%
\thanks{J.~A.~Fill's research was supported by the Acheson~J.~Duncan Fund for the Advancement of Research in
Statistics.}\\
\small Department of Applied Mathematics and Statistics\\[-0.8ex]
\small The Johns Hopkins University\\[-0.8ex]
\small 3400 N.~Charles Street\\[-0.8ex]
\small Baltimore, MD, USA 21218-2682\\[-0.8ex]
\small \texttt{jimfill@jhu.edu}\\[-0.8ex]
\small \texttt{http://www.ams.jhu.edu/}$\sim$\texttt{fill/}\\
\and Svante Janson\\
\small Department of Mathematics\\[-0.8ex]
\small Uppsala University\\[-0.8ex]
\small P.~O.~Box 480\\[-0.8ex]
\small SE-751 06 Uppsala, Sweden\\[-0.8ex]
\small \texttt{svante.janson@math.uu.se}\\[-0.8ex]
\small \texttt{http://www.math.uu.se/}$\sim$\texttt{svante/}\\
\and Mark Daniel Ward\thanks{M.~D.~Ward's research was supported by NSF
  Science \& Technology Center grant CCF-0939370.}\\
\small Department of Statistics\\[-0.8ex]
\small Purdue University\\[-0.8ex]
\small 250 N.~University Street\\[-0.8ex]
\small West Lafayette, IN, USA 47907-2066\\[-0.8ex]
\small \texttt{mdw@purdue.edu}\\[-0.8ex]
\small \texttt{http://www.stat.purdue.edu/}$\sim$\texttt{mdw/}
}
\begin{document}

\maketitle

{\small Mathematics Subject Classifications: 
05A16,   
05A17,   
68W40    
}

\begin{abstract}
Wilf's Sixth Unsolved Problem asks for any interesting properties of the
set of partitions of integers for which the (nonzero)
multiplicities of the parts are all different.
We refer to these as \emph{Wilf partitions}.
Using $f(n)$ to denote the number of Wilf partitions,
we establish lead-order asymptotics for $\ln{f(n)}$.

\bigskip\noindent \textbf{Keywords:} asymptotic enumeration,
partitions of integers, Wilf partitions
\end{abstract}


\centerline{\emph{Dedicated to the memory of Herbert S.~Wilf (1931--2012).}}

\section{The Problem}
Herbert S.~Wilf was an expert in many areas of combinatorics.
Besides writing numerous papers and books, he was a friend
and mentor to many colleagues.  Herb often asked intriguing questions
that opened up whole new areas of investigation.  In the later years
of his life, he posted a set of eight Unsolved Problems on his
webpage~\cite{WilfPDF}.
At the time of Herb's death in January 2012, 
only one of these problems was solved (the third problem:\ see~\cite{Ward2010}).
In this paper, we discuss Wilf's sixth ``Unsolved Problem.''

\begin{center}\fbox{
      \begin{minipage}{16cm}
\textbf{Distinct multiplicities} \cite{WilfPDF}

Let $T(n)$ be the set of partitions of $n$ for which the (nonzero)
multiplicities of its parts are all different, and write $f(n) =
|T(n)|$. See Sloane's sequence {\tt A098859} for a table of
values. Find any interesting theorems about $f(n)$. The mapping that
sends a partition of~$n$ to another partition of~$n$ in which the
roles of parts and multiplicities are interchanged is a well defined
involution on $T(n)$, which is how I arrived at the study of this
problem.
      \end{minipage}
    }
\end{center}

\section{Definitions}
We refer to partitions in which the (nonzero) multiplicities of the
parts are all different as \emph{Wilf partitions}.

Define
$$\mathcal{M}_{r} := \{(m_{1},m_{2},\ldots,m_{r}):\hbox{$m_{k}$ are
  distinct positive integers}\},$$
and
$$\mathcal{P}_{r} := \{(p_{1},p_{2},\ldots,p_{r}):\hbox{$p_{k}$ are
  distinct positive integers with $p_{1}<\cdots<p_{r}$}\}.$$
Then the set of Wilf partitions of~$n$ is
$$T(n) := \bigcup_{r\geq 1}
\{ ({\bf m}, {\bf p}):{\bf m} = (m_{1},\ldots,m_{r});\ 
{\bf p} = (p_{1},\ldots,p_{r});\ 
m_{1}p_{1}+\cdots+m_{r}p_{r}=n\}.$$

\myspace
We write $p(n, r)$ for the number of partitions of $n$ into 
$r$ parts, and $d(\cdot)$ to denote the divisor function, i.e., $d(n)$
is the number of divisors of $n$.  

\myspace
Wilf defines $f(n) = |T(n)|$ and then asks to find anything
interesting about $T(n)$.  

\section{Main Result}
\begin{theorem}
Let $f(n)$ denote the number of partitions of $n$ with distinct
(nonzero) multiplicities.  Then
$$
\ln{f(n)} \sim \frac{6^{1/3}}{3} n^{1/3}\ln{n} 
\sim
(6n)^{1/3}\ln[(6n)^{1/3}]
\qquad \text{\rm as\ }n\to\infty.
$$
\end{theorem}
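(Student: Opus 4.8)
The plan is to establish matching lower and upper bounds for $\ln f(n)$, both organized around one heuristic. A Wilf partition of $n$ with $r$ distinct parts carries $r$ distinct multiplicities, so the weighted sum $\sum_k m_k p_k$ is minimized when $\{m_k\}$ and $\{p_k\}$ both equal $\{1,\dots,r\}$ paired in opposite order, giving $\binom{r+2}{3}=\tfrac{r(r+1)(r+2)}{6}$. Hence every Wilf partition of $n$ has at most $r_{\max}=(6n)^{1/3}(1+o(1))$ distinct parts. Since $\ln r_{\max}=\tfrac13\ln(6n)+o(1)\sim\tfrac13\ln n$, the target exponent $\tfrac{6^{1/3}}{3}n^{1/3}\ln n=\tfrac13(6n)^{1/3}\ln n$ equals $r_{\max}\ln r_{\max}(1+o(1))\sim\ln(r_{\max}!)$. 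The whole theorem therefore reduces to showing that the dominant contribution comes from $r$ near $r_{\max}$ and that the number of such partitions is $r_{\max}!^{\,1+o(1)}$.

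For the lower bound I would isolate a convenient sub-family: fix the multiplicity set to be $\{1,2,\dots,r\}$ with $r=r_{\max}-o(r_{\max})$ and count assignments of distinct parts $p_1,\dots,p_r$ (the part carrying multiplicity $k$ being $p_k$) with $\sum_{k=1}^r k\,p_k=n$. To manufacture enough solutions, let each $p_k$ range over a disjoint interval $I_k$ of length $\asymp n/(rk)$ centred at $n/(rk)$; the intervals sit at geometrically separated scales, so disjointness makes the parts automatically distinct, the centres are chosen so that $\mathbb{E}\sum_k k\,p_k=n$, and $\mathrm{Var}(\sum_k k\,p_k)=O(n^2/r)$. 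A local central limit theorem (or a second-moment estimate for $\sum_{k\ge2}k\,p_k$ near its mean, followed by solving for $p_1$) then shows that a $1/\mathrm{poly}(n)$ fraction of the $\prod_k|I_k|$ choices hit $n$ exactly. Since $\ln\prod_k|I_k|=\sum_k\bigl(\ln n-\ln r-\ln k\bigr)=r\ln n-2r\ln r+O(r)=(1-o(1))\tfrac13(6n)^{1/3}\ln n$, and the polynomial and distinctness corrections are $e^{o(r\ln r)}$, this yields the lower bound.

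The upper bound is where I expect the real difficulty. Write $f(n)=\sum_r f_r(n)$, with $f_r$ counting Wilf partitions with exactly $r$ distinct parts. The naive estimate ``$f_r(n)\le r!\times(\text{number of admissible part- and multiplicity-multisets})$'' is fatal: encoding each multiset by its excess over $\{1,\dots,r\}$ produces two partitions $a,b$ with $|a|+|b|\le n-\binom{r+2}{3}$, and for $r$ bounded away from $r_{\max}$ this budget is $\Theta(n)$, so the number of multisets is $e^{\Theta(\sqrt n)}$, swamping the target $e^{\Theta(n^{1/3}\ln n)}$. Thus bounding the number of multiplicity-to-part matchings by $r!$ is far too lossy for small $r$; one must exploit that, once the parts are spread out, only a tiny fraction of the $r!$ matchings produce the exact total $n$. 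The key missing ingredient is an anticoncentration bound for $\sum_i p_i\,\phi(p_i)$ over bijections $\phi$, showing that the number landing on any single value is at most $r!$ divided by a factor reflecting the spread of $\{p_i\}$. This is precisely the step that uses both distinctness constraints simultaneously, and it is what forces the exponent down to $n^{1/3}\ln n$ rather than $\sqrt n$.

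Equivalently, one can attempt the saddle-point route $f(n)\le x_0^{-n}F(x_0)$ with $F(x)=\sum_{\text{Wilf}}x^{n}$ and $x_0\to1^-$. The obstruction there is that, because parts and multiplicities must both be distinct, $F$ has no product form: it is a sum of permanents, $F(x)=\sum_r\sum_{|P|=|M|=r}\operatorname{perm}\bigl[(x^{pm})_{p\in P,\,m\in M}\bigr]$, and the elementary estimate $\operatorname{perm}\le\prod(\text{row sums})$ decouples the two sets so crudely that the sum over $M$ diverges. Producing a usable bound on $F(x)$ as $x\to1$ — equivalently, the matching anticoncentration estimate above — is the step I expect to consume most of the work; once it is available, summing the resulting bounds over $r\le r_{\max}$ and optimising should reproduce $\tfrac13(6n)^{1/3}\ln n$ and close the gap with the lower bound.
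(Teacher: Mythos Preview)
Your upper-bound plan has a genuine gap, and you have overestimated the difficulty. The anticoncentration/permanent route is unnecessary because there is a one-line reduction you have missed: look at the \emph{products} $q_i:=m_ip_i$ rather than at the multiplicities and parts separately. If you list the terms of a Wilf partition in decreasing order of $q_i$ (breaking ties by multiplicity), then $(q_1,\dots,q_r)$ is an ordinary partition of $n$ into $r$ parts, and the Wilf partition is recovered from it by choosing, for each $i$, which divisor of $q_i$ serves as the multiplicity. Hence
\[
f_r(n)\;\le\; p(n,r)\,\Bigl[\max_{1\le j\le n} d(j)\Bigr]^r.
\]
Wigert's bound $\ln d(j)=O(\ln j/\ln\ln j)$ and $r\le(6n)^{1/3}$ make the second factor $\exp\!\bigl(o(n^{1/3}\ln n)\bigr)$, while the elementary estimate $p(n,r)=O\!\bigl(n^{r-1}/(r!\,(r-1)!)\bigr)$ valid for $r=O(n^{1/3})$ gives $\ln p(n,r)\le(1+o(1))\tfrac13(6n)^{1/3}\ln n$. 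Summing over $r$ finishes the upper bound. The product map collapses both distinctness constraints into a single partition of $n$ into $r$ parts; the divisor function absorbs the fibre, and it is subexponentially small in the relevant scale. No permanent bounds, no saddle point, no matching anticoncentration.

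Your lower-bound sketch also has a flaw, though a more local one. You assert that the intervals $I_k$ centred at $n/(rk)$ with $|I_k|\asymp n/(rk)$ are disjoint, but for $k$ near $r\approx(6n)^{1/3}$ consecutive centres differ by $n/\bigl(rk(k+1)\bigr)\approx n/r^3=O(1)$ while $|I_k|\approx n/r^2=\Theta(n^{1/3})$, so the intervals overlap massively and distinctness of the $p_k$ is not automatic. The paper sidesteps this by taking both the multiplicity set \emph{and} the part set equal to $\{1,\dots,Kb\}$ with $Kb\approx a n^{1/3}$ and $a<6^{1/3}$: split this range into $K$ equal blocks and count only permutations sending block $j$ to block $K{+}1{-}j$. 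For $K$ large this forces $\sum_i i\,p_i$ close enough to the rearrangement minimum that $n-\sum_{i\ge2} i\,p_i>0$ can be absorbed as one extra part with multiplicity $1$. That yields $(b!)^K$ Wilf partitions with distinctness for free and no local CLT, and $\ln\bigl((b!)^K\bigr)=(a/3+o(1))\,n^{1/3}\ln n$.
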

The theorem will be established by matching upper and lower bounds for
$\ln{f(n)}$
in Lemmas \ref{upperbound} and \ref{lowerbound} below.

In Figure~\ref{valuesplot},
we plot the values of
$$
\frac{\ln f(n)}{(6n)^{1/3}\ln[(6n)^{1/3}]}
$$ 
for $31 \leq n \leq 508$.
\begin{center}
\begin{figure}
\begin{tikzpicture}[xscale=.03,yscale=20]
\draw[->] (-.5,.68) -- (511,.68) node[right] {$x$} coordinate(x axis);
\draw[->] (0,.68) -- (0,.92) node[above] {$y$} coordinate(y axis);
\foreach \y in {.70,.75,.80,.85,.90}
  \draw (3, \y) -- (-3, \y) node[anchor=east] {$\y$};
\foreach \x in {50,100,150,200,250,300,350,400,450,500}
  \draw (\x, .685) -- (\x, .675) node[anchor=north] {$\x$};
\draw (-.5,.67) node {$0$};
\draw[black, ultra thick, fill=black] (31,0.706110637128887) ellipse(.06 and .06/500);
\draw[black, ultra thick, fill=black] (32,0.705240876035161) ellipse(.06 and .06/500);
\draw[black, ultra thick, fill=black] (33,0.710169863906683) ellipse(.06 and .06/500);
\draw[black, ultra thick, fill=black] (34,0.711433042128664) ellipse(.06 and .06/500);
\draw[black, ultra thick, fill=black] (35,0.717596036249754) ellipse(.06 and .06/500);
\draw[black, ultra thick, fill=black] (36,0.715338391559701) ellipse(.06 and .06/500);
\draw[black, ultra thick, fill=black] (37,0.721981012739142) ellipse(.06 and .06/500);
\draw[black, ultra thick, fill=black] (38,0.721771381665251) ellipse(.06 and .06/500);
\draw[black, ultra thick, fill=black] (39,0.725898316044059) ellipse(.06 and .06/500);
\draw[black, ultra thick, fill=black] (40,0.72626853249332) ellipse(.06 and .06/500);
\draw[black, ultra thick, fill=black] (41,0.731192162699007) ellipse(.06 and .06/500);
\draw[black, ultra thick, fill=black] (42,0.730034588902367) ellipse(.06 and .06/500);
\draw[black, ultra thick, fill=black] (43,0.735281478457999) ellipse(.06 and .06/500);
\draw[black, ultra thick, fill=black] (44,0.734795628441338) ellipse(.06 and .06/500);
\draw[black, ultra thick, fill=black] (45,0.738733091353069) ellipse(.06 and .06/500);
\draw[black, ultra thick, fill=black] (46,0.73867108802635) ellipse(.06 and .06/500);
\draw[black, ultra thick, fill=black] (47,0.742900185560511) ellipse(.06 and .06/500);
\draw[black, ultra thick, fill=black] (48,0.742257467561187) ellipse(.06 and .06/500);
\draw[black, ultra thick, fill=black] (49,0.746284392717777) ellipse(.06 and .06/500);
\draw[black, ultra thick, fill=black] (50,0.746100684698997) ellipse(.06 and .06/500);
\draw[black, ultra thick, fill=black] (51,0.749441612317767) ellipse(.06 and .06/500);
\draw[black, ultra thick, fill=black] (52,0.749441773965944) ellipse(.06 and .06/500);
\draw[black, ultra thick, fill=black] (53,0.752831279782746) ellipse(.06 and .06/500);
\draw[black, ultra thick, fill=black] (54,0.752680630156736) ellipse(.06 and .06/500);
\draw[black, ultra thick, fill=black] (55,0.755906791356821) ellipse(.06 and .06/500);
\draw[black, ultra thick, fill=black] (56,0.755791700053252) ellipse(.06 and .06/500);
\draw[black, ultra thick, fill=black] (57,0.758582125048533) ellipse(.06 and .06/500);
\draw[black, ultra thick, fill=black] (58,0.758631376040015) ellipse(.06 and .06/500);
\draw[black, ultra thick, fill=black] (59,0.761468206982315) ellipse(.06 and .06/500);
\draw[black, ultra thick, fill=black] (60,0.761505832916969) ellipse(.06 and .06/500);
\draw[black, ultra thick, fill=black] (61,0.763960103967583) ellipse(.06 and .06/500);
\draw[black, ultra thick, fill=black] (62,0.764060843057156) ellipse(.06 and .06/500);
\draw[black, ultra thick, fill=black] (63,0.76658317244959) ellipse(.06 and .06/500);
\draw[black, ultra thick, fill=black] (64,0.766595704715914) ellipse(.06 and .06/500);
\draw[black, ultra thick, fill=black] (65,0.768904986653663) ellipse(.06 and .06/500);
\draw[black, ultra thick, fill=black] (66,0.769062358919275) ellipse(.06 and .06/500);
\draw[black, ultra thick, fill=black] (67,0.771131053130279) ellipse(.06 and .06/500);
\draw[black, ultra thick, fill=black] (68,0.771340599247202) ellipse(.06 and .06/500);
\draw[black, ultra thick, fill=black] (69,0.773422932783723) ellipse(.06 and .06/500);
\draw[black, ultra thick, fill=black] (70,0.773547042448312) ellipse(.06 and .06/500);
\draw[black, ultra thick, fill=black] (71,0.775485081246977) ellipse(.06 and .06/500);
\draw[black, ultra thick, fill=black] (72,0.775818344920678) ellipse(.06 and .06/500);
\draw[black, ultra thick, fill=black] (73,0.777498782177007) ellipse(.06 and .06/500);
\draw[black, ultra thick, fill=black] (74,0.777744760580408) ellipse(.06 and .06/500);
\draw[black, ultra thick, fill=black] (75,0.77962694855662) ellipse(.06 and .06/500);
\draw[black, ultra thick, fill=black] (76,0.779772984750158) ellipse(.06 and .06/500);
\draw[black, ultra thick, fill=black] (77,0.781403499179355) ellipse(.06 and .06/500);
\draw[black, ultra thick, fill=black] (78,0.781792868529785) ellipse(.06 and .06/500);
\draw[black, ultra thick, fill=black] (79,0.783257777666823) ellipse(.06 and .06/500);
\draw[black, ultra thick, fill=black] (80,0.78361168023681) ellipse(.06 and .06/500);
\draw[black, ultra thick, fill=black] (81,0.785159181854192) ellipse(.06 and .06/500);
\draw[black, ultra thick, fill=black] (82,0.785364019319305) ellipse(.06 and .06/500);
\draw[black, ultra thick, fill=black] (83,0.786809720247371) ellipse(.06 and .06/500);
\draw[black, ultra thick, fill=black] (84,0.787269531199771) ellipse(.06 and .06/500);
\draw[black, ultra thick, fill=black] (85,0.788493942155313) ellipse(.06 and .06/500);
\draw[black, ultra thick, fill=black] (86,0.788828495319814) ellipse(.06 and .06/500);
\draw[black, ultra thick, fill=black] (87,0.790227518520125) ellipse(.06 and .06/500);
\draw[black, ultra thick, fill=black] (88,0.79049476160637) ellipse(.06 and .06/500);
\draw[black, ultra thick, fill=black] (89,0.791705186389935) ellipse(.06 and .06/500);
\draw[black, ultra thick, fill=black] (90,0.792189319399616) ellipse(.06 and .06/500);
\draw[black, ultra thick, fill=black] (91,0.793245610931814) ellipse(.06 and .06/500);
\draw[black, ultra thick, fill=black] (92,0.793636178906206) ellipse(.06 and .06/500);
\draw[black, ultra thick, fill=black] (93,0.79483005226509) ellipse(.06 and .06/500);
\draw[black, ultra thick, fill=black] (94,0.795108534749934) ellipse(.06 and .06/500);
\draw[black, ultra thick, fill=black] (95,0.796200699192181) ellipse(.06 and .06/500);
\draw[black, ultra thick, fill=black] (96,0.796680937209092) ellipse(.06 and .06/500);
\draw[black, ultra thick, fill=black] (97,0.797586177524331) ellipse(.06 and .06/500);
\draw[black, ultra thick, fill=black] (98,0.797984214964852) ellipse(.06 and .06/500);
\draw[black, ultra thick, fill=black] (99,0.799049424668919) ellipse(.06 and .06/500);
\draw[black, ultra thick, fill=black] (100,0.799379630033019) ellipse(.06 and .06/500);
\draw[black, ultra thick, fill=black] (101,0.800289319010827) ellipse(.06 and .06/500);
\draw[black, ultra thick, fill=black] (102,0.800772665334315) ellipse(.06 and .06/500);
\draw[black, ultra thick, fill=black] (103,0.801584572052385) ellipse(.06 and .06/500);
\draw[black, ultra thick, fill=black] (104,0.802004198694052) ellipse(.06 and .06/500);
\draw[black, ultra thick, fill=black] (105,0.80293278072025) ellipse(.06 and .06/500);
\draw[black, ultra thick, fill=black] (106,0.803253336826086) ellipse(.06 and .06/500);
\draw[black, ultra thick, fill=black] (107,0.804077308420786) ellipse(.06 and .06/500);
\draw[black, ultra thick, fill=black] (108,0.804573881036168) ellipse(.06 and .06/500);
\draw[black, ultra thick, fill=black] (109,0.805273996843882) ellipse(.06 and .06/500);
\draw[black, ultra thick, fill=black] (110,0.805696214285108) ellipse(.06 and .06/500);
\draw[black, ultra thick, fill=black] (111,0.806518254891049) ellipse(.06 and .06/500);
\draw[black, ultra thick, fill=black] (112,0.806874959639077) ellipse(.06 and .06/500);
\draw[black, ultra thick, fill=black] (113,0.807592020333306) ellipse(.06 and .06/500);
\draw[black, ultra thick, fill=black] (114,0.808074452465637) ellipse(.06 and .06/500);
\draw[black, ultra thick, fill=black] (115,0.808716464790315) ellipse(.06 and .06/500);
\draw[black, ultra thick, fill=black] (116,0.809139488737431) ellipse(.06 and .06/500);
\draw[black, ultra thick, fill=black] (117,0.809863477996509) ellipse(.06 and .06/500);
\draw[black, ultra thick, fill=black] (118,0.810223566491169) ellipse(.06 and .06/500);
\draw[black, ultra thick, fill=black] (119,0.810875382950992) ellipse(.06 and .06/500);
\draw[black, ultra thick, fill=black] (120,0.811362005876159) ellipse(.06 and .06/500);
\draw[black, ultra thick, fill=black] (121,0.811922121960845) ellipse(.06 and .06/500);
\draw[black, ultra thick, fill=black] (122,0.812340594313567) ellipse(.06 and .06/500);
\draw[black, ultra thick, fill=black] (123,0.812997389056019) ellipse(.06 and .06/500);
\draw[black, ultra thick, fill=black] (124,0.813370739266003) ellipse(.06 and .06/500);
\draw[black, ultra thick, fill=black] (125,0.813950922072906) ellipse(.06 and .06/500);
\draw[black, ultra thick, fill=black] (126,0.814414052704507) ellipse(.06 and .06/500);
\draw[black, ultra thick, fill=black] (127,0.814930468584504) ellipse(.06 and .06/500);
\draw[black, ultra thick, fill=black] (128,0.815352133658764) ellipse(.06 and .06/500);
\draw[black, ultra thick, fill=black] (129,0.81593525506633) ellipse(.06 and .06/500);
\draw[black, ultra thick, fill=black] (130,0.816307719452679) ellipse(.06 and .06/500);
\draw[black, ultra thick, fill=black] (131,0.816832503845985) ellipse(.06 and .06/500);
\draw[black, ultra thick, fill=black] (132,0.817291240156789) ellipse(.06 and .06/500);
\draw[black, ultra thick, fill=black] (133,0.817755281731513) ellipse(.06 and .06/500);
\draw[black, ultra thick, fill=black] (134,0.818164520706203) ellipse(.06 and .06/500);
\draw[black, ultra thick, fill=black] (135,0.818698600200935) ellipse(.06 and .06/500);
\draw[black, ultra thick, fill=black] (136,0.819070967097392) ellipse(.06 and .06/500);
\draw[black, ultra thick, fill=black] (137,0.81954402391933) ellipse(.06 and .06/500);
\draw[black, ultra thick, fill=black] (138,0.81998333895098) ellipse(.06 and .06/500);
\draw[black, ultra thick, fill=black] (139,0.820412384612539) ellipse(.06 and .06/500);
\draw[black, ultra thick, fill=black] (140,0.820818919670332) ellipse(.06 and .06/500);
\draw[black, ultra thick, fill=black] (141,0.821296181237362) ellipse(.06 and .06/500);
\draw[black, ultra thick, fill=black] (142,0.821662836442879) ellipse(.06 and .06/500);
\draw[black, ultra thick, fill=black] (143,0.822097514760091) ellipse(.06 and .06/500);
\draw[black, ultra thick, fill=black] (144,0.822526708405256) ellipse(.06 and .06/500);
\draw[black, ultra thick, fill=black] (145,0.822919028651353) ellipse(.06 and .06/500);
\draw[black, ultra thick, fill=black] (146,0.823309394197762) ellipse(.06 and .06/500);
\draw[black, ultra thick, fill=black] (147,0.823747991604472) ellipse(.06 and .06/500);
\draw[black, ultra thick, fill=black] (148,0.824113022352538) ellipse(.06 and .06/500);
\draw[black, ultra thick, fill=black] (149,0.824509104149123) ellipse(.06 and .06/500);
\draw[black, ultra thick, fill=black] (150,0.824920182345627) ellipse(.06 and .06/500);
\draw[black, ultra thick, fill=black] (151,0.825284334397209) ellipse(.06 and .06/500);
\draw[black, ultra thick, fill=black] (152,0.825668085760619) ellipse(.06 and .06/500);
\draw[black, ultra thick, fill=black] (153,0.826067249359654) ellipse(.06 and .06/500);
\draw[black, ultra thick, fill=black] (154,0.826422974538604) ellipse(.06 and .06/500);
\draw[black, ultra thick, fill=black] (155,0.826791090111328) ellipse(.06 and .06/500);
\draw[black, ultra thick, fill=black] (156,0.827189538565963) ellipse(.06 and .06/500);
\draw[black, ultra thick, fill=black] (157,0.827525823332532) ellipse(.06 and .06/500);
\draw[black, ultra thick, fill=black] (158,0.827896354498738) ellipse(.06 and .06/500);
\draw[black, ultra thick, fill=black] (159,0.828265478106706) ellipse(.06 and .06/500);
\draw[black, ultra thick, fill=black] (160,0.828617321495158) ellipse(.06 and .06/500);
\draw[black, ultra thick, fill=black] (161,0.828956006747748) ellipse(.06 and .06/500);
\draw[black, ultra thick, fill=black] (162,0.829338040110526) ellipse(.06 and .06/500);
\draw[black, ultra thick, fill=black] (163,0.829653889970261) ellipse(.06 and .06/500);
\draw[black, ultra thick, fill=black] (164,0.830015829860727) ellipse(.06 and .06/500);
\draw[black, ultra thick, fill=black] (165,0.830356336882051) ellipse(.06 and .06/500);
\draw[black, ultra thick, fill=black] (166,0.83069779763364) ellipse(.06 and .06/500);
\draw[black, ultra thick, fill=black] (167,0.831014592898181) ellipse(.06 and .06/500);
\draw[black, ultra thick, fill=black] (168,0.831384651786174) ellipse(.06 and .06/500);
\draw[black, ultra thick, fill=black] (169,0.831680280951399) ellipse(.06 and .06/500);
\draw[black, ultra thick, fill=black] (170,0.832029730611298) ellipse(.06 and .06/500);
\draw[black, ultra thick, fill=black] (171,0.832346716951683) ellipse(.06 and .06/500);
\draw[black, ultra thick, fill=black] (172,0.832682070347819) ellipse(.06 and .06/500);
\draw[black, ultra thick, fill=black] (173,0.832977238690333) ellipse(.06 and .06/500);
\draw[black, ultra thick, fill=black] (174,0.83333227215135) ellipse(.06 and .06/500);
\draw[black, ultra thick, fill=black] (175,0.833611860447188) ellipse(.06 and .06/500);
\draw[black, ultra thick, fill=black] (176,0.833951805701951) ellipse(.06 and .06/500);
\draw[black, ultra thick, fill=black] (177,0.834246928029212) ellipse(.06 and .06/500);
\draw[black, ultra thick, fill=black] (178,0.834572035702212) ellipse(.06 and .06/500);
\draw[black, ultra thick, fill=black] (179,0.834850172982628) ellipse(.06 and .06/500);
\draw[black, ultra thick, fill=black] (180,0.835193465500406) ellipse(.06 and .06/500);
\draw[black, ultra thick, fill=black] (181,0.835457206057912) ellipse(.06 and .06/500);
\draw[black, ultra thick, fill=black] (182,0.835784866180839) ellipse(.06 and .06/500);
\draw[black, ultra thick, fill=black] (183,0.836061835902081) ellipse(.06 and .06/500);
\draw[black, ultra thick, fill=black] (184,0.836379376998579) ellipse(.06 and .06/500);
\draw[black, ultra thick, fill=black] (185,0.836641083156803) ellipse(.06 and .06/500);
\draw[black, ultra thick, fill=black] (186,0.836970561550972) ellipse(.06 and .06/500);
\draw[black, ultra thick, fill=black] (187,0.837220983310418) ellipse(.06 and .06/500);
\draw[black, ultra thick, fill=black] (188,0.837538896993402) ellipse(.06 and .06/500);
\draw[black, ultra thick, fill=black] (189,0.837798916062862) ellipse(.06 and .06/500);
\draw[black, ultra thick, fill=black] (190,0.838106379807468) ellipse(.06 and .06/500);
\draw[black, ultra thick, fill=black] (191,0.838354320373869) ellipse(.06 and .06/500);
\draw[black, ultra thick, fill=black] (192,0.838672160941738) ellipse(.06 and .06/500);
\draw[black, ultra thick, fill=black] (193,0.838910098510306) ellipse(.06 and .06/500);
\draw[black, ultra thick, fill=black] (194,0.839216462770823) ellipse(.06 and .06/500);
\draw[black, ultra thick, fill=black] (195,0.839462018227143) ellipse(.06 and .06/500);
\draw[black, ultra thick, fill=black] (196,0.8397611334322) ellipse(.06 and .06/500);
\draw[black, ultra thick, fill=black] (197,0.839995978809832) ellipse(.06 and .06/500);
\draw[black, ultra thick, fill=black] (198,0.840301136069842) ellipse(.06 and .06/500);
\draw[black, ultra thick, fill=black] (199,0.840528181664755) ellipse(.06 and .06/500);
\draw[black, ultra thick, fill=black] (200,0.840824810194428) ellipse(.06 and .06/500);
\draw[black, ultra thick, fill=black] (201,0.841057187065222) ellipse(.06 and .06/500);
\draw[black, ultra thick, fill=black] (202,0.84134627656073) ellipse(.06 and .06/500);
\draw[black, ultra thick, fill=black] (203,0.841569704059137) ellipse(.06 and .06/500);
\draw[black, ultra thick, fill=black] (204,0.841864099963387) ellipse(.06 and .06/500);
\draw[black, ultra thick, fill=black] (205,0.842081049811931) ellipse(.06 and .06/500);
\draw[black, ultra thick, fill=black] (206,0.84236698083317) ellipse(.06 and .06/500);
\draw[black, ultra thick, fill=black] (207,0.842587581439577) ellipse(.06 and .06/500);
\draw[black, ultra thick, fill=black] (208,0.842868015253047) ellipse(.06 and .06/500);
\draw[black, ultra thick, fill=black] (209,0.843081197406911) ellipse(.06 and .06/500);
\draw[black, ultra thick, fill=black] (210,0.84336397775686) ellipse(.06 and .06/500);
\draw[black, ultra thick, fill=black] (211,0.843572047715916) ellipse(.06 and .06/500);
\draw[black, ultra thick, fill=black] (212,0.843848513508718) ellipse(.06 and .06/500);
\draw[black, ultra thick, fill=black] (213,0.844058535493104) ellipse(.06 and .06/500);
\draw[black, ultra thick, fill=black] (214,0.844329624109845) ellipse(.06 and .06/500);
\draw[black, ultra thick, fill=black] (215,0.844533479426807) ellipse(.06 and .06/500);
\draw[black, ultra thick, fill=black] (216,0.844806367074699) ellipse(.06 and .06/500);
\draw[black, ultra thick, fill=black] (217,0.845006075454512) ellipse(.06 and .06/500);
\draw[black, ultra thick, fill=black] (218,0.845272615066781) ellipse(.06 and .06/500);
\draw[black, ultra thick, fill=black] (219,0.845473264699724) ellipse(.06 and .06/500);
\draw[black, ultra thick, fill=black] (220,0.845735881079094) ellipse(.06 and .06/500);
\draw[black, ultra thick, fill=black] (221,0.845931619238971) ellipse(.06 and .06/500);
\draw[black, ultra thick, fill=black] (222,0.846194091918313) ellipse(.06 and .06/500);
\draw[black, ultra thick, fill=black] (223,0.846386242842779) ellipse(.06 and .06/500);
\draw[black, ultra thick, fill=black] (224,0.846643942147361) ellipse(.06 and .06/500);
\draw[black, ultra thick, fill=black] (225,0.846836197349097) ellipse(.06 and .06/500);
\draw[black, ultra thick, fill=black] (226,0.847090174149138) ellipse(.06 and .06/500);
\draw[black, ultra thick, fill=black] (227,0.847278183168817) ellipse(.06 and .06/500);
\draw[black, ultra thick, fill=black] (228,0.847531437632031) ellipse(.06 and .06/500);
\draw[black, ultra thick, fill=black] (229,0.847716855045482) ellipse(.06 and .06/500);
\draw[black, ultra thick, fill=black] (230,0.847965468093396) ellipse(.06 and .06/500);
\draw[black, ultra thick, fill=black] (231,0.848150139558112) ellipse(.06 and .06/500);
\draw[black, ultra thick, fill=black] (232,0.848396049026807) ellipse(.06 and .06/500);
\draw[black, ultra thick, fill=black] (233,0.848577287229476) ellipse(.06 and .06/500);
\draw[black, ultra thick, fill=black] (234,0.848821298787101) ellipse(.06 and .06/500);
\draw[black, ultra thick, fill=black] (235,0.84900029811414) ellipse(.06 and .06/500);
\draw[black, ultra thick, fill=black] (236,0.849240828726604) ellipse(.06 and .06/500);
\draw[black, ultra thick, fill=black] (237,0.849418626139366) ellipse(.06 and .06/500);
\draw[black, ultra thick, fill=black] (238,0.849656328038203) ellipse(.06 and .06/500);
\draw[black, ultra thick, fill=black] (239,0.849831226533833) ellipse(.06 and .06/500);
\draw[black, ultra thick, fill=black] (240,0.8500667689095) ellipse(.06 and .06/500);
\draw[black, ultra thick, fill=black] (241,0.850240197354317) ellipse(.06 and .06/500);
\draw[black, ultra thick, fill=black] (242,0.850472405713737) ellipse(.06 and .06/500);
\draw[black, ultra thick, fill=black] (243,0.850643800598496) ellipse(.06 and .06/500);
\draw[black, ultra thick, fill=black] (244,0.850873964156905) ellipse(.06 and .06/500);
\draw[black, ultra thick, fill=black] (245,0.851043139675054) ellipse(.06 and .06/500);
\draw[black, ultra thick, fill=black] (246,0.851270472979794) ellipse(.06 and .06/500);
\draw[black, ultra thick, fill=black] (247,0.851438351788748) ellipse(.06 and .06/500);
\draw[black, ultra thick, fill=black] (248,0.851662903551531) ellipse(.06 and .06/500);
\draw[black, ultra thick, fill=black] (249,0.851828666059565) ellipse(.06 and .06/500);
\draw[black, ultra thick, fill=black] (250,0.852051202771949) ellipse(.06 and .06/500);
\draw[black, ultra thick, fill=black] (251,0.852215056748923) ellipse(.06 and .06/500);
\draw[black, ultra thick, fill=black] (252,0.852434613398563) ellipse(.06 and .06/500);
\draw[black, ultra thick, fill=black] (253,0.85259756555496) ellipse(.06 and .06/500);
\draw[black, ultra thick, fill=black] (254,0.852814576529689) ellipse(.06 and .06/500);
\draw[black, ultra thick, fill=black] (255,0.85297487608836) ellipse(.06 and .06/500);
\draw[black, ultra thick, fill=black] (256,0.853190416366892) ellipse(.06 and .06/500);
\draw[black, ultra thick, fill=black] (257,0.853349367368359) ellipse(.06 and .06/500);
\draw[black, ultra thick, fill=black] (258,0.853561358890873) ellipse(.06 and .06/500);
\draw[black, ultra thick, fill=black] (259,0.853719504937801) ellipse(.06 and .06/500);
\draw[black, ultra thick, fill=black] (260,0.853929406542141) ellipse(.06 and .06/500);
\draw[black, ultra thick, fill=black] (261,0.854084986044061) ellipse(.06 and .06/500);
\draw[black, ultra thick, fill=black] (262,0.854293449671453) ellipse(.06 and .06/500);
\draw[black, ultra thick, fill=black] (263,0.854447673877359) ellipse(.06 and .06/500);
\draw[black, ultra thick, fill=black] (264,0.854652609490384) ellipse(.06 and .06/500);
\draw[black, ultra thick, fill=black] (265,0.854806403587871) ellipse(.06 and .06/500);
\draw[black, ultra thick, fill=black] (266,0.855009436361619) ellipse(.06 and .06/500);
\draw[black, ultra thick, fill=black] (267,0.855160312489851) ellipse(.06 and .06/500);
\draw[black, ultra thick, fill=black] (268,0.855362138345285) ellipse(.06 and .06/500);
\draw[black, ultra thick, fill=black] (269,0.855512141220382) ellipse(.06 and .06/500);
\draw[black, ultra thick, fill=black] (270,0.855710175378856) ellipse(.06 and .06/500);
\draw[black, ultra thick, fill=black] (271,0.85585977859272) ellipse(.06 and .06/500);
\draw[black, ultra thick, fill=black] (272,0.856056237374688) ellipse(.06 and .06/500);
\draw[black, ultra thick, fill=black] (273,0.856202936104425) ellipse(.06 and .06/500);
\draw[black, ultra thick, fill=black] (274,0.856398276046547) ellipse(.06 and .06/500);
\draw[black, ultra thick, fill=black] (275,0.856544104691467) ellipse(.06 and .06/500);
\draw[black, ultra thick, fill=black] (276,0.856735740295006) ellipse(.06 and .06/500);
\draw[black, ultra thick, fill=black] (277,0.856881470306025) ellipse(.06 and .06/500);
\draw[black, ultra thick, fill=black] (278,0.85707156767241) ellipse(.06 and .06/500);
\draw[black, ultra thick, fill=black] (279,0.857214212872762) ellipse(.06 and .06/500);
\draw[black, ultra thick, fill=black] (280,0.857403356926117) ellipse(.06 and .06/500);
\draw[black, ultra thick, fill=black] (281,0.85754551852711) ellipse(.06 and .06/500);
\draw[black, ultra thick, fill=black] (282,0.8577308829634) ellipse(.06 and .06/500);
\draw[black, ultra thick, fill=black] (283,0.857872813644724) ellipse(.06 and .06/500);
\draw[black, ultra thick, fill=black] (284,0.858056897384845) ellipse(.06 and .06/500);
\draw[black, ultra thick, fill=black] (285,0.858195877380214) ellipse(.06 and .06/500);
\draw[black, ultra thick, fill=black] (286,0.858379104181757) ellipse(.06 and .06/500);
\draw[black, ultra thick, fill=black] (287,0.858517571671865) ellipse(.06 and .06/500);
\draw[black, ultra thick, fill=black] (288,0.858697034894728) ellipse(.06 and .06/500);
\draw[black, ultra thick, fill=black] (289,0.858835532799895) ellipse(.06 and .06/500);
\draw[black, ultra thick, fill=black] (290,0.859013787287443) ellipse(.06 and .06/500);
\draw[black, ultra thick, fill=black] (291,0.859149248485706) ellipse(.06 and .06/500);
\draw[black, ultra thick, fill=black] (292,0.859326775671882) ellipse(.06 and .06/500);
\draw[black, ultra thick, fill=black] (293,0.85946193483804) ellipse(.06 and .06/500);
\draw[black, ultra thick, fill=black] (294,0.859635727482767) ellipse(.06 and .06/500);
\draw[black, ultra thick, fill=black] (295,0.859770817187534) ellipse(.06 and .06/500);
\draw[black, ultra thick, fill=black] (296,0.859943595855656) ellipse(.06 and .06/500);
\draw[black, ultra thick, fill=black] (297,0.860075829943812) ellipse(.06 and .06/500);
\draw[black, ultra thick, fill=black] (298,0.860247875138705) ellipse(.06 and .06/500);
\draw[black, ultra thick, fill=black] (299,0.860379782917348) ellipse(.06 and .06/500);
\draw[black, ultra thick, fill=black] (300,0.860548187337113) ellipse(.06 and .06/500);
\draw[black, ultra thick, fill=black] (301,0.860680217563965) ellipse(.06 and .06/500);
\draw[black, ultra thick, fill=black] (302,0.860847686630586) ellipse(.06 and .06/500);
\draw[black, ultra thick, fill=black] (303,0.860976766766274) ellipse(.06 and .06/500);
\draw[black, ultra thick, fill=black] (304,0.861143596750986) ellipse(.06 and .06/500);
\draw[black, ultra thick, fill=black] (305,0.861272527652483) ellipse(.06 and .06/500);
\draw[black, ultra thick, fill=black] (306,0.8614358156577) ellipse(.06 and .06/500);
\draw[black, ultra thick, fill=black] (307,0.861564786150533) ellipse(.06 and .06/500);
\draw[black, ultra thick, fill=black] (308,0.861727207593518) ellipse(.06 and .06/500);
\draw[black, ultra thick, fill=black] (309,0.861853424783699) ellipse(.06 and .06/500);
\draw[black, ultra thick, fill=black] (310,0.862015229539335) ellipse(.06 and .06/500);
\draw[black, ultra thick, fill=black] (311,0.862141267877435) ellipse(.06 and .06/500);
\draw[black, ultra thick, fill=black] (312,0.862299649329664) ellipse(.06 and .06/500);
\draw[black, ultra thick, fill=black] (313,0.862425820647065) ellipse(.06 and .06/500);
\draw[black, ultra thick, fill=black] (314,0.862583409358933) ellipse(.06 and .06/500);
\draw[black, ultra thick, fill=black] (315,0.862706800921043) ellipse(.06 and .06/500);
\draw[black, ultra thick, fill=black] (316,0.862863837694225) ellipse(.06 and .06/500);
\draw[black, ultra thick, fill=black] (317,0.862987188826734) ellipse(.06 and .06/500);
\draw[black, ultra thick, fill=black] (318,0.863140888465439) ellipse(.06 and .06/500);
\draw[black, ultra thick, fill=black] (319,0.863264293894228) ellipse(.06 and .06/500);
\draw[black, ultra thick, fill=black] (320,0.863417258581321) ellipse(.06 and .06/500);
\draw[black, ultra thick, fill=black] (321,0.863538080879384) ellipse(.06 and .06/500);
\draw[black, ultra thick, fill=black] (322,0.86369051933139) ellipse(.06 and .06/500);
\draw[black, ultra thick, fill=black] (323,0.863811227595521) ellipse(.06 and .06/500);
\draw[black, ultra thick, fill=black] (324,0.863960460652438) ellipse(.06 and .06/500);
\draw[black, ultra thick, fill=black] (325,0.864081296068892) ellipse(.06 and .06/500);
\draw[black, ultra thick, fill=black] (326,0.864229854240172) ellipse(.06 and .06/500);
\draw[black, ultra thick, fill=black] (327,0.864348127200065) ellipse(.06 and .06/500);
\draw[black, ultra thick, fill=black] (328,0.864496170412907) ellipse(.06 and .06/500);
\draw[black, ultra thick, fill=black] (329,0.86461442471051) ellipse(.06 and .06/500);
\draw[black, ultra thick, fill=black] (330,0.86475937864726) ellipse(.06 and .06/500);
\draw[black, ultra thick, fill=black] (331,0.86487769314054) ellipse(.06 and .06/500);
\draw[black, ultra thick, fill=black] (332,0.865022019805345) ellipse(.06 and .06/500);
\draw[black, ultra thick, fill=black] (333,0.865137927782628) ellipse(.06 and .06/500);
\draw[black, ultra thick, fill=black] (334,0.865281757493751) ellipse(.06 and .06/500);
\draw[black, ultra thick, fill=black] (335,0.865397588649282) ellipse(.06 and .06/500);
\draw[black, ultra thick, fill=black] (336,0.865538462032924) ellipse(.06 and .06/500);
\draw[black, ultra thick, fill=black] (337,0.865654406020632) ellipse(.06 and .06/500);
\draw[black, ultra thick, fill=black] (338,0.865794680011445) ellipse(.06 and .06/500);
\draw[black, ultra thick, fill=black] (339,0.865908259015452) ellipse(.06 and .06/500);
\draw[black, ultra thick, fill=black] (340,0.866048044126205) ellipse(.06 and .06/500);
\draw[black, ultra thick, fill=black] (341,0.866161614644934) ellipse(.06 and .06/500);
\draw[black, ultra thick, fill=black] (342,0.866298577406302) ellipse(.06 and .06/500);
\draw[black, ultra thick, fill=black] (343,0.866412181242264) ellipse(.06 and .06/500);
\draw[black, ultra thick, fill=black] (344,0.866548569943966) ellipse(.06 and .06/500);
\draw[black, ultra thick, fill=black] (345,0.866659959786038) ellipse(.06 and .06/500);
\draw[black, ultra thick, fill=black] (346,0.8667958781005) ellipse(.06 and .06/500);
\draw[black, ultra thick, fill=black] (347,0.866907211428783) ellipse(.06 and .06/500);
\draw[black, ultra thick, fill=black] (348,0.867040425220795) ellipse(.06 and .06/500);
\draw[black, ultra thick, fill=black] (349,0.867151823323884) ellipse(.06 and .06/500);
\draw[black, ultra thick, fill=black] (350,0.867284487455086) ellipse(.06 and .06/500);
\draw[black, ultra thick, fill=black] (351,0.867393728068351) ellipse(.06 and .06/500);
\draw[black, ultra thick, fill=black] (352,0.867525927831589) ellipse(.06 and .06/500);
\draw[black, ultra thick, fill=black] (353,0.867635149013413) ellipse(.06 and .06/500);
\draw[black, ultra thick, fill=black] (354,0.867764772462733) ellipse(.06 and .06/500);
\draw[black, ultra thick, fill=black] (355,0.867873995692234) ellipse(.06 and .06/500);
\draw[black, ultra thick, fill=black] (356,0.868003088537012) ellipse(.06 and .06/500);
\draw[black, ultra thick, fill=black] (357,0.868110294069741) ellipse(.06 and .06/500);
\draw[black, ultra thick, fill=black] (358,0.868238930778713) ellipse(.06 and .06/500);
\draw[black, ultra thick, fill=black] (359,0.868346070780872) ellipse(.06 and .06/500);
\draw[black, ultra thick, fill=black] (360,0.868472249858604) ellipse(.06 and .06/500);
\draw[black, ultra thick, fill=black] (361,0.868579406784568) ellipse(.06 and .06/500);
\draw[black, ultra thick, fill=black] (362,0.868705079356796) ellipse(.06 and .06/500);
\draw[black, ultra thick, fill=black] (363,0.868810284155368) ellipse(.06 and .06/500);
\draw[black, ultra thick, fill=black] (364,0.868935496195128) ellipse(.06 and .06/500);
\draw[black, ultra thick, fill=black] (365,0.869040657368539) ellipse(.06 and .06/500);
\draw[black, ultra thick, fill=black] (366,0.869163537559609) ellipse(.06 and .06/500);
\draw[black, ultra thick, fill=black] (367,0.869268668144423) ellipse(.06 and .06/500);
\draw[black, ultra thick, fill=black] (368,0.869391051953145) ellipse(.06 and .06/500);
\draw[black, ultra thick, fill=black] (369,0.869494351175332) ellipse(.06 and .06/500);
\draw[black, ultra thick, fill=black] (370,0.869616283851466) ellipse(.06 and .06/500);
\draw[black, ultra thick, fill=black] (371,0.8697194999898) ellipse(.06 and .06/500);
\draw[black, ultra thick, fill=black] (372,0.869839215549209) ellipse(.06 and .06/500);
\draw[black, ultra thick, fill=black] (373,0.869942408742434) ellipse(.06 and .06/500);
\draw[black, ultra thick, fill=black] (374,0.870061643295009) ellipse(.06 and .06/500);
\draw[black, ultra thick, fill=black] (375,0.87016307268932) ellipse(.06 and .06/500);
\draw[black, ultra thick, fill=black] (376,0.870281851745595) ellipse(.06 and .06/500);
\draw[black, ultra thick, fill=black] (377,0.87038321373358) ellipse(.06 and .06/500);
\draw[black, ultra thick, fill=black] (378,0.870499897169705) ellipse(.06 and .06/500);
\draw[black, ultra thick, fill=black] (379,0.870601190213363) ellipse(.06 and .06/500);
\draw[black, ultra thick, fill=black] (380,0.870717398711012) ellipse(.06 and .06/500);
\draw[black, ultra thick, fill=black] (381,0.870817037538749) ellipse(.06 and .06/500);
\draw[black, ultra thick, fill=black] (382,0.870932800305404) ellipse(.06 and .06/500);
\draw[black, ultra thick, fill=black] (383,0.871032342895348) ellipse(.06 and .06/500);
\draw[black, ultra thick, fill=black] (384,0.871146112276467) ellipse(.06 and .06/500);
\draw[black, ultra thick, fill=black] (385,0.871245585297241) ellipse(.06 and .06/500);
\draw[black, ultra thick, fill=black] (386,0.871358891662887) ellipse(.06 and .06/500);
\draw[black, ultra thick, fill=black] (387,0.871456782487462) ellipse(.06 and .06/500);
\draw[black, ultra thick, fill=black] (388,0.871569644403698) ellipse(.06 and .06/500);
\draw[black, ultra thick, fill=black] (389,0.871667438872748) ellipse(.06 and .06/500);
\draw[black, ultra thick, fill=black] (390,0.87177841935016) ellipse(.06 and .06/500);
\draw[black, ultra thick, fill=black] (391,0.871876108122048) ellipse(.06 and .06/500);
\draw[black, ultra thick, fill=black] (392,0.871986630668078) ellipse(.06 and .06/500);
\draw[black, ultra thick, fill=black] (393,0.872082838515389) ellipse(.06 and .06/500);
\draw[black, ultra thick, fill=black] (394,0.872192920200201) ellipse(.06 and .06/500);
\draw[black, ultra thick, fill=black] (395,0.872289007991714) ellipse(.06 and .06/500);
\draw[black, ultra thick, fill=black] (396,0.872397301811067) ellipse(.06 and .06/500);
\draw[black, ultra thick, fill=black] (397,0.872493279942558) ellipse(.06 and .06/500);
\draw[black, ultra thick, fill=black] (398,0.872601133083879) ellipse(.06 and .06/500);
\draw[black, ultra thick, fill=black] (399,0.872695694658062) ellipse(.06 and .06/500);
\draw[black, ultra thick, fill=black] (400,0.872803104985007) ellipse(.06 and .06/500);
\draw[black, ultra thick, fill=black] (401,0.872897541489309) ellipse(.06 and .06/500);
\draw[black, ultra thick, fill=black] (402,0.873003268297731) ellipse(.06 and .06/500);
\draw[black, ultra thick, fill=black] (403,0.87309757291072) ellipse(.06 and .06/500);
\draw[black, ultra thick, fill=black] (404,0.87320285732806) ellipse(.06 and .06/500);
\draw[black, ultra thick, fill=black] (405,0.873295832748329) ellipse(.06 and .06/500);
\draw[black, ultra thick, fill=black] (406,0.873400676979167) ellipse(.06 and .06/500);
\draw[black, ultra thick, fill=black] (407,0.87349350815045) ellipse(.06 and .06/500);
\draw[black, ultra thick, fill=black] (408,0.873596761738986) ellipse(.06 and .06/500);
\draw[black, ultra thick, fill=black] (409,0.873689449462178) ellipse(.06 and .06/500);
\draw[black, ultra thick, fill=black] (410,0.873792272502161) ellipse(.06 and .06/500);
\draw[black, ultra thick, fill=black] (411,0.873883692456329) ellipse(.06 and .06/500);
\draw[black, ultra thick, fill=black] (412,0.87398607534751) ellipse(.06 and .06/500);
\draw[black, ultra thick, fill=black] (413,0.874077350431866) ellipse(.06 and .06/500);
\draw[black, ultra thick, fill=black] (414,0.874178231235825) ellipse(.06 and .06/500);
\draw[black, ultra thick, fill=black] (415,0.87426934301907) ellipse(.06 and .06/500);
\draw[black, ultra thick, fill=black] (416,0.874369791308834) ellipse(.06 and .06/500);
\draw[black, ultra thick, fill=black] (417,0.874459711045196) ellipse(.06 and .06/500);
\draw[black, ultra thick, fill=black] (418,0.874559729907602) ellipse(.06 and .06/500);
\draw[black, ultra thick, fill=black] (419,0.874649485498653) ellipse(.06 and .06/500);
\draw[black, ultra thick, fill=black] (420,0.874748083128084) ellipse(.06 and .06/500);
\draw[black, ultra thick, fill=black] (421,0.874837661622929) ellipse(.06 and .06/500);
\draw[black, ultra thick, fill=black] (422,0.874935837202) ellipse(.06 and .06/500);
\draw[black, ultra thick, fill=black] (423,0.875024290104258) ellipse(.06 and .06/500);
\draw[black, ultra thick, fill=black] (424,0.875122031057477) ellipse(.06 and .06/500);
\draw[black, ultra thick, fill=black] (425,0.875210313404668) ellipse(.06 and .06/500);
\draw[black, ultra thick, fill=black] (426,0.875306712738004) ellipse(.06 and .06/500);
\draw[black, ultra thick, fill=black] (427,0.875394803032488) ellipse(.06 and .06/500);
\draw[black, ultra thick, fill=black] (428,0.875490786029875) ellipse(.06 and .06/500);
\draw[black, ultra thick, fill=black] (429,0.87557781142218) ellipse(.06 and .06/500);
\draw[black, ultra thick, fill=black] (430,0.87567336746903) ellipse(.06 and .06/500);
\draw[black, ultra thick, fill=black] (431,0.875760205889876) ellipse(.06 and .06/500);
\draw[black, ultra thick, fill=black] (432,0.875854492928707) ellipse(.06 and .06/500);
\draw[black, ultra thick, fill=black] (433,0.875941133527598) ellipse(.06 and .06/500);
\draw[black, ultra thick, fill=black] (434,0.876035008650172) ellipse(.06 and .06/500);
\draw[black, ultra thick, fill=black] (435,0.876120642541942) ellipse(.06 and .06/500);
\draw[black, ultra thick, fill=black] (436,0.876214086463075) ellipse(.06 and .06/500);
\draw[black, ultra thick, fill=black] (437,0.876299525944739) ellipse(.06 and .06/500);
\draw[black, ultra thick, fill=black] (438,0.876391778805833) ellipse(.06 and .06/500);
\draw[black, ultra thick, fill=black] (439,0.876477005209529) ellipse(.06 and .06/500);
\draw[black, ultra thick, fill=black] (440,0.876568849447096) ellipse(.06 and .06/500);
\draw[black, ultra thick, fill=black] (441,0.87665312050723) ellipse(.06 and .06/500);
\draw[black, ultra thick, fill=black] (442,0.876744540111389) ellipse(.06 and .06/500);
\draw[black, ultra thick, fill=black] (443,0.876828611838286) ellipse(.06 and .06/500);
\draw[black, ultra thick, fill=black] (444,0.876918901577402) ellipse(.06 and .06/500);
\draw[black, ultra thick, fill=black] (445,0.877002750905863) ellipse(.06 and .06/500);
\draw[black, ultra thick, fill=black] (446,0.877092635259447) ellipse(.06 and .06/500);
\draw[black, ultra thick, fill=black] (447,0.877175582595219) ellipse(.06 and .06/500);
\draw[black, ultra thick, fill=black] (448,0.877265046927331) ellipse(.06 and .06/500);
\draw[black, ultra thick, fill=black] (449,0.877347782608739) ellipse(.06 and .06/500);
\draw[black, ultra thick, fill=black] (450,0.877436183951001) ellipse(.06 and .06/500);
\draw[black, ultra thick, fill=black] (451,0.877518685512955) ellipse(.06 and .06/500);
\draw[black, ultra thick, fill=black] (452,0.877606684097839) ellipse(.06 and .06/500);
\draw[black, ultra thick, fill=black] (453,0.87768833506274) ellipse(.06 and .06/500);
\draw[black, ultra thick, fill=black] (454,0.87777591681251) ellipse(.06 and .06/500);
\draw[black, ultra thick, fill=black] (455,0.877857350890862) ellipse(.06 and .06/500);
\draw[black, ultra thick, fill=black] (456,0.877943923085006) ellipse(.06 and .06/500);
\draw[black, ultra thick, fill=black] (457,0.878025113814512) ellipse(.06 and .06/500);
\draw[black, ultra thick, fill=black] (458,0.878111293947889) ellipse(.06 and .06/500);
\draw[black, ultra thick, fill=black] (459,0.878191679032726) ellipse(.06 and .06/500);
\draw[black, ultra thick, fill=black] (460,0.87827744507874) ellipse(.06 and .06/500);
\draw[black, ultra thick, fill=black] (461,0.878357601009235) ellipse(.06 and .06/500);
\draw[black, ultra thick, fill=black] (462,0.878442415997329) ellipse(.06 and .06/500);
\draw[black, ultra thick, fill=black] (463,0.878522326917683) ellipse(.06 and .06/500);
\draw[black, ultra thick, fill=black] (464,0.878606749851369) ellipse(.06 and .06/500);
\draw[black, ultra thick, fill=black] (465,0.878685897247036) ellipse(.06 and .06/500);
\draw[black, ultra thick, fill=black] (466,0.878769908719976) ellipse(.06 and .06/500);
\draw[black, ultra thick, fill=black] (467,0.87884882363393) ellipse(.06 and .06/500);
\draw[black, ultra thick, fill=black] (468,0.878931938683702) ellipse(.06 and .06/500);
\draw[black, ultra thick, fill=black] (469,0.8790105969677) ellipse(.06 and .06/500);
\draw[black, ultra thick, fill=black] (470,0.879093327325287) ellipse(.06 and .06/500);
\draw[black, ultra thick, fill=black] (471,0.879171262250723) ellipse(.06 and .06/500);
\draw[black, ultra thick, fill=black] (472,0.879253584538263) ellipse(.06 and .06/500);
\draw[black, ultra thick, fill=black] (473,0.879331281883762) ellipse(.06 and .06/500);
\draw[black, ultra thick, fill=black] (474,0.879412756412285) ellipse(.06 and .06/500);
\draw[black, ultra thick, fill=black] (475,0.879490195782171) ellipse(.06 and .06/500);
\draw[black, ultra thick, fill=black] (476,0.879571285910876) ellipse(.06 and .06/500);
\draw[black, ultra thick, fill=black] (477,0.879648037442789) ellipse(.06 and .06/500);
\draw[black, ultra thick, fill=black] (478,0.879728728740615) ellipse(.06 and .06/500);
\draw[black, ultra thick, fill=black] (479,0.879805238809953) ellipse(.06 and .06/500);
\draw[black, ultra thick, fill=black] (480,0.879885128729821) ellipse(.06 and .06/500);
\draw[black, ultra thick, fill=black] (481,0.879961369878469) ellipse(.06 and .06/500);
\draw[black, ultra thick, fill=black] (482,0.880040882104111) ellipse(.06 and .06/500);
\draw[black, ultra thick, fill=black] (483,0.88011647764701) ellipse(.06 and .06/500);
\draw[black, ultra thick, fill=black] (484,0.880195592691343) ellipse(.06 and .06/500);
\draw[black, ultra thick, fill=black] (485,0.880270940195997) ellipse(.06 and .06/500);
\draw[black, ultra thick, fill=black] (486,0.880349296152301) ellipse(.06 and .06/500);
\draw[black, ultra thick, fill=black] (487,0.880424374616328) ellipse(.06 and .06/500);
\draw[black, ultra thick, fill=black] (488,0.880502359689373) ellipse(.06 and .06/500);
\draw[black, ultra thick, fill=black] (489,0.880576820874563) ellipse(.06 and .06/500);
\draw[black, ultra thick, fill=black] (490,0.880654415924603) ellipse(.06 and .06/500);
\draw[black, ultra thick, fill=black] (491,0.880728626411034) ellipse(.06 and .06/500);
\draw[black, ultra thick, fill=black] (492,0.880805503030194) ellipse(.06 and .06/500);
\draw[black, ultra thick, fill=black] (493,0.880879440506974) ellipse(.06 and .06/500);
\draw[black, ultra thick, fill=black] (494,0.880955950417238) ellipse(.06 and .06/500);
\draw[black, ultra thick, fill=black] (495,0.881029306833476) ellipse(.06 and .06/500);
\draw[black, ultra thick, fill=black] (496,0.881105428825383) ellipse(.06 and .06/500);
\draw[black, ultra thick, fill=black] (497,0.881178528160929) ellipse(.06 and .06/500);
\draw[black, ultra thick, fill=black] (498,0.881253973885509) ellipse(.06 and .06/500);
\draw[black, ultra thick, fill=black] (499,0.881326799748092) ellipse(.06 and .06/500);
\draw[black, ultra thick, fill=black] (500,0.881401884269429) ellipse(.06 and .06/500);
\draw[black, ultra thick, fill=black] (501,0.881474152544609) ellipse(.06 and .06/500);
\draw[black, ultra thick, fill=black] (502,0.881548855652134) ellipse(.06 and .06/500);
\draw[black, ultra thick, fill=black] (503,0.881620870873875) ellipse(.06 and .06/500);
\draw[black, ultra thick, fill=black] (504,0.88169493180628) ellipse(.06 and .06/500);
\draw[black, ultra thick, fill=black] (505,0.88176666869163) ellipse(.06 and .06/500);
\draw[black, ultra thick, fill=black] (506,0.881840372107423) ellipse(.06 and .06/500);
\draw[black, ultra thick, fill=black] (507,0.881911583413063) ellipse(.06 and .06/500);
\draw[black, ultra thick, fill=black] (508,0.881984912978703) ellipse(.06 and .06/500);
\end{tikzpicture}
\caption{Plot of the values of 
$\frac{\ln f(n)}{(6n)^{1/3}\ln[(6n)^{1/3}]}$ for $31 \leq n \leq 508$.
}\label{valuesplot}
\end{figure}
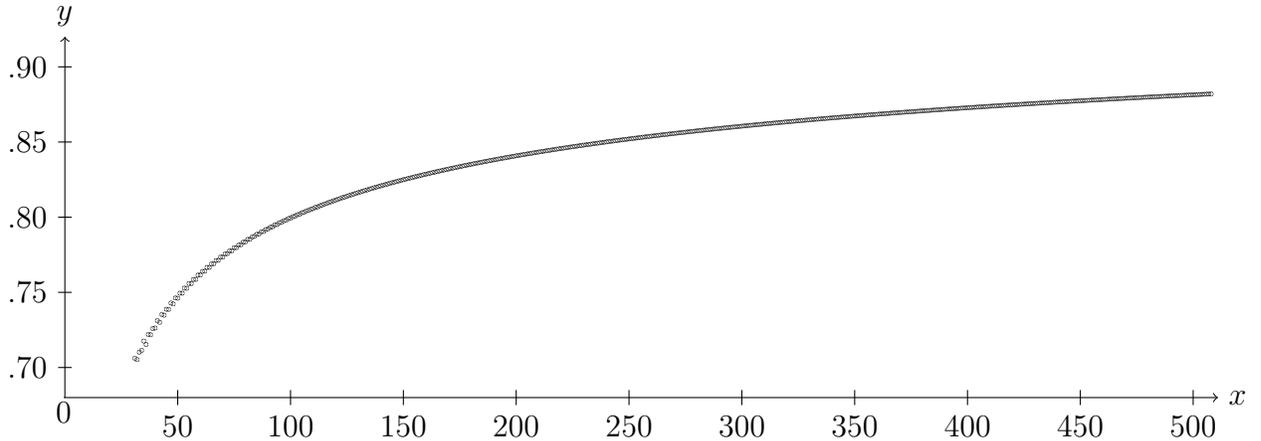
\end{center}

\section{Proofs}
\begin{lemma}\label{upperboundRlemma}
The number $r$ of distinct multiplicities in a Wilf partition of $n$ is at most
$(6 n)^{1/3}$.
\end{lemma}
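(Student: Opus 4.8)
The plan is to bound $n$ from below in terms of $r$ by minimizing the weight $n = \sum_{k=1}^{r} m_k p_k$ over all Wilf partitions having exactly $r$ distinct multiplicities, obtaining a cubic lower bound, and then inverting it. The two distinctness constraints are what make this work: the parts $p_1 < p_2 < \cdots < p_r$ are distinct positive integers, so $p_k \ge k$ for each $k$, and the $m_k$ are $r$ distinct positive integers, so when listed in increasing order the $j$-th smallest is at least $j$.

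First I would use the bound $p_k \ge k$ termwise. Writing $m_k$ for the multiplicity carried by the $k$-th smallest part, we get
\begin{equation*}
n = \sum_{k=1}^{r} m_k p_k \ge \sum_{k=1}^{r} k\, m_k .
\end{equation*}
It then remains to bound $\sum_{k=1}^{r} k\, m_k$ from below, where $m_1, \ldots, m_r$ is some listing of $r$ distinct positive integers attached to the coefficients $1, 2, \ldots, r$.

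At this point the rearrangement inequality finishes the job. For any fixed set of multiplicity values, the sum $\sum_k k\, m_k$ is smallest when the largest value is paired with the smallest coefficient, i.e.\ in opposite order; and since the $j$-th smallest multiplicity is at least $j$, this opposite-order pairing is in turn at least
\begin{equation*}
\sum_{k=1}^{r} k\,(r+1-k) = \frac{r(r+1)(r+2)}{6}.
\end{equation*}
Combining the two displays, every Wilf partition of $n$ with $r$ distinct multiplicities satisfies
\begin{equation*}
n \ge \frac{r(r+1)(r+2)}{6} \ge \frac{r^{3}}{6},
\end{equation*}
so that $r^{3} \le 6n$ and hence $r \le (6n)^{1/3}$, as claimed.

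The argument is entirely elementary, so there is no serious obstacle; the only step that genuinely requires care is the minimization, where one must simultaneously optimize the pairing (via rearrangement) and invoke the value bounds $p_k \ge k$ and $m_{(j)} \ge j$. I would state the rearrangement step explicitly rather than appeal to it informally, since it is the one place where the pairing between parts and multiplicities actually matters, and I would keep the parts reduction as a clean termwise inequality so that the pairing is carried along rather than re-optimized prematurely.
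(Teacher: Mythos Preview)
Your argument is correct and follows essentially the same approach as the paper: both identify the extremal Wilf partition with $r$ distinct multiplicities as the one with parts $1,2,\dots,r$ carrying multiplicities $r,r-1,\dots,1$, yielding $n \ge r(r+1)(r+2)/6 \ge r^3/6$. The paper simply asserts this configuration is minimal, whereas you supply the justification via the termwise bound $p_k \ge k$ and the rearrangement inequality; your version is thus the same proof, but with the extremality step made explicit.
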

\begin{proof}
For a given positive integer~$r$,
the smallest possible~$n$ admitting a Wilf partition with~$r$ distinct
multiplicities is obtained by taking:
\begin{itemize}
\item multiplicity $m_{1} = r$ for part $p_{1} = 1$;
\item multiplicity $m_{2} = r-1$ for part $p_{2} = 2$;
\item multiplicity $m_{3} = r-2$ for part $p_{3} = 3$;
\item \qquad \vdots
\item multiplicity $m_{r} = 1$ for part $p_{r} = r$.
\end{itemize}
This yields
$$n = \sum_{i = 1}^r (r+ 1 - i) i 
= \frac{1}{6}r^{3} + \frac{1}{2}r^{2} + \frac{1}{3}r.$$
Hence $r \leq (6 n)^{1/3}$.
This completes the proof of Lemma~\ref{upperboundRlemma}.
\end{proof}

\begin{lemma}\label{upperbound}
An upper bound for $\ln{f(n)}$ is
$$
\ln{f(n)} \le (1 + o(1))  \frac{6^{1/3}}{3} n^{1/3}\ln{n} .
$$
\end{lemma}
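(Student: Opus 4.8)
The plan is to split $f(n) = \sum_{r\ge 1} f_r(n)$, where $f_r(n)$ counts the Wilf partitions of $n$ with exactly $r$ distinct multiplicities, and to bound each $f_r(n)$ separately. By Lemma~\ref{upperboundRlemma} this sum runs only over $1 \le r \le R := (6n)^{1/3}$, so it has at most $R$ terms and hence $\ln f(n) \le \ln R + \max_{1\le r\le R}\ln f_r(n)$. Since $\ln R = O(\ln n)$ is negligible against $n^{1/3}\ln n$, it suffices to bound $\max_r \ln f_r(n)$.

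The key step is a deliberately lossy overcounting. A Wilf partition with $r$ distinct multiplicities is an unordered set of $r$ pairs $(p_i,m_i)$ with the $p_i$ distinct, the $m_i$ distinct, and $\sum_i m_i p_i = n$. Because the parts are distinct, each such partition corresponds to exactly $r!$ ordered tuples, and these form a subset of \emph{all} ordered $r$-tuples $((p_1,m_1),\dots,(p_r,m_r))$ of positive integers with $\sum_i m_i p_i = n$. Organizing the latter by the products $c_i := m_i p_i$ — grouping by a composition $c_1+\cdots+c_r = n$ and counting ordered factorizations $c_i = m_i p_i$, of which there are $d(c_i)$ — I would obtain $r!\,f_r(n) \le \sum_{c_1+\cdots+c_r=n}\prod_i d(c_i)$. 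Here I throw away both distinctness conditions, keeping only the sum constraint and the ordering overcount $r!$.

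Bounding crudely, $\prod_i d(c_i) \le d_{\max}^r$ with $d_{\max} := \max_{c\le n} d(c)$, and the number of compositions of $n$ into $r$ positive parts is $\binom{n-1}{r-1}$, so
$$
f_r(n) \le \frac{d_{\max}^r}{r!}\binom{n-1}{r-1} \le \frac{n^{r-1}\,d_{\max}^r}{r!\,(r-1)!}.
$$
Taking logarithms, using Stirling, the bound $r \le R$ (so $\ln r \le \tfrac13\ln(6n)$), and the classical estimate $\ln d_{\max} = O(\ln n/\ln\ln n) = o(\ln n)$, the problem reduces to showing $\max_{1\le r\le R}\Phi(r) \le (1+o(1))\tfrac{R}{3}\ln n$ for $\Phi(r) := (r-1)\ln n - \ln(r!) - \ln((r-1)!)$. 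Writing $\Phi(r)\approx r\ln n - 2r\ln r + 2r$, its derivative in $r$ is $\approx \ln n - 2\ln r$, which stays positive on $[1,R]$ because $\ln r \le \ln R = \tfrac13\ln(6n) \ll \tfrac12\ln n$; hence $\Phi$ increases and is maximized at the endpoint $r = R$. Evaluating there gives $R\ln n - 2R\ln R + O(R)$, and since $R\ln R = \tfrac13(6n)^{1/3}\ln n + O(n^{1/3})$, this equals $(1+o(1))\tfrac{6^{1/3}}{3}n^{1/3}\ln n$, as required.

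I expect the main difficulty to be conceptual rather than computational: believing that discarding both distinctness conditions is affordable. It is, because the dominant saving comes precisely from the $1/(r!)$ (unordering the distinct parts) together with $\binom{n-1}{r-1}\le n^{r-1}/(r-1)!$; these produce the $-2r\ln r$ term that, at $r=R=(6n)^{1/3}$, cancels two-thirds of the $r\ln n$, leaving the constant $6^{1/3}/3$. The only genuine verifications are that the divisor factor $d_{\max}^r$ is of lower order (standard) and that the maximum of $\Phi$ is attained at $r=R$, which holds because the unconstrained maximizer of $r\ln n - 2r\ln r$ lies near $\sqrt n \gg R$.
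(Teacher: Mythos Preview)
Your proof is correct and follows essentially the same strategy as the paper: bound $f_r(n)$ by (ways to write $n$ as an $r$-term sum of products) times $d_{\max}^r$, use Lemma~\ref{upperboundRlemma} to cap $r$ at $(6n)^{1/3}$, and invoke the classical $\ln d_{\max}=o(\ln n)$ so that the divisor factor is negligible. The one genuine difference is in how the bound $n^{r-1}/(r!(r-1)!)$ is obtained. The paper maps each Wilf partition to the \emph{partition} of $n$ given by the products $m_ip_i$ (via a canonical tie-breaking order) and then cites Knuth's exercise for $p(n,r)=O\bigl(n^{r-1}/(r!(r-1)!)\bigr)$. You instead pass to \emph{ordered} tuples, pick up an explicit $1/r!$ from the distinctness of the parts, and bound the resulting compositions by $\binom{n-1}{r-1}\le n^{r-1}/(r-1)!$. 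Your route is slightly more self-contained, since it derives the same estimate without appealing to an external result on $p(n,r)$; the paper's route has the advantage of isolating $p(n,r)$ as a clean intermediate object. Either way the arithmetic endgame---maximizing $r\ln n - 2r\ln r$ over $r\le (6n)^{1/3}$---is identical.
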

\begin{proof}
For each Wilf partition of~$n$,
put the terms in decreasing order according to the values of the
products $m_{i}p_{i}$.  If two terms are equal,
break the tie by writing in decreasing order of the multiplicities.
This gives a canonical way to write the Wilf partitions.  For instance,
$$27 = 8 + 3 + 3 + 2 + 2 + 2 + 1 + 1 + 1 + 1 + 1 + 1 + 1$$
can be written as
$$27 = (1 \times 8) + (7 \times 1) + (3 \times 2) + (2 \times 3) = m_1 p_1 + m_2 p_2 + m_3
p_3 + m_4 p_4.$$
Notice that the products are, respectively 8, 7, 6, 6.  The
ordering of the two terms each with product~$6$ 
was decided by writing those terms in decreasing order of the multiplicities.

With this representation in mind, it follows that 
the number $f(n,r)$
of Wilf partitions of~$n$ with $r$ distinct multiplicities is no larger than
$$
p(n, r) \times [\max \{d(j): 1 \leq j \leq n\}]^r.
$$

Severin~\cite{Wigert1907}
[see also \cite[Th.~317, Chap.~XVIII.1]{HardyWright}] 
showed that
$$
\limsup_{n\rightarrow\infty}\frac{\ln d(n)}{(\ln{n}) / (\ln{\ln{n}})}
= \ln{2}.
$$
Therefore, there exists a constant $C$ such that, provided~$n$ is sufficiently large,
$f(n,r)$
is no larger than
\begin{equation}
  \label{fnr}
p(n, r) \times \max_{3\leq j\leq n} \left\{\exp\left(r C \frac{\ln j}{\ln \ln j}\right)\right\}
= p(n, r) \times \exp\left(r C \frac{\ln n}{\ln \ln n}\right).
\end{equation}

Since by Lemma~\ref{upperboundRlemma} we have $r \leq (6n)^{1/3}$ if 
$f(n, r) > 0$,
the second factor here does not contribute to the lead-order logarithmic asymptotics for $f(n)$.  
Now we utilize
Exercise 7.2.1.4-34 in \citet{Knuth4A},
which concerns $p(n, r)$ and is stated (in our notation)
for $r \leq n^{1/3}$; but  Knuth's argument is easily checked to 
hold also for $r \leq (c n)^{1/3}$ for any constant~$c$.
Choosing $c = 6$ we find, for all $r \leq (6 n)^{1/3}$,
$$
p(n, r) = O\left(\frac{n^{r - 1}}{r! (r - 1)!}\right) \leq \exp\left[(1 + o(1)) (1/3) (6 n)^{1/3} \ln n\right]
$$
as $n \to \infty$,
and then \eqref{fnr} yields the same estimate for $f(n,r)$.  
The proof of Lemma~\ref{upperbound} is completed by a summation over
$r\le(6n)^{1/3}$. 
\end{proof}

\begin{lemma}\label{lowerbound}
A lower bound for $\ln{f(n)}$ is
$$\ln{f(n)} \ge (1 + o(1)) \frac{6^{1/3}}{3} n^{1/3}\ln{n} .$$
\end{lemma}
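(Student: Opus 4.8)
The plan is to produce a matching lower bound by exhibiting, for a suitable $r$, an explicit injection of a large family of Wilf partitions and then counting that family. First I would restrict attention to Wilf partitions whose parts are exactly $1,2,\dots,r$: if $(m_1,\dots,m_r)$ is any tuple of \emph{distinct} positive integers with $\sum_{k=1}^r k\,m_k=n$, then assigning multiplicity $m_k$ to the part $k$ yields a genuine Wilf partition of $n$, and distinct tuples yield distinct partitions. Hence, writing $N^{=}(n,r)$ for the number of such distinct tuples, $f(n)\ge N^{=}(n,r)$ for every $r$. I would take $r=\lfloor (6n)^{1/3}\rfloor$, the largest value allowed by Lemma~\ref{upperboundRlemma}, since the upper bound shows this is where the main contribution sits.

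Next I would bound the relaxed count $N(n,r):=\#\{(m_1,\dots,m_r): m_k\ge 1,\ \sum_k k\,m_k=n\}$, dropping distinctness. Substituting $m_k=1+b_k$ turns this into the number of solutions of $\sum_k k\,b_k=n-\binom{r+1}{2}$ with $b_k\ge 0$, i.e.\ the number of partitions of $n':=n-\binom{r+1}{2}$ into parts of size at most $r$; by conjugation this is the number of partitions of $n'$ into at most $r$ parts, which is at least $p(n',r)$. An elementary bound suffices here: partitions of $n'$ into exactly $r$ parts number at least $\binom{n'-1}{r-1}/r!$ (each is hit by at most $r!$ compositions), and since $n'=(1-o(1))n$ and $\ln r\sim\tfrac13\ln n$, taking logarithms gives $\ln N(n,r)\ge (1-o(1))\bigl[(r-1)\ln n-2\ln r!\bigr]=(1-o(1))\tfrac{r}{3}\ln n=(1-o(1))\tfrac{6^{1/3}}{3}n^{1/3}\ln n$, matching the target.

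The crux---and the step I expect to be the main obstacle---is passing from $N(n,r)$ to $N^{=}(n,r)$, i.e.\ showing that insisting on \emph{distinct} multiplicities costs only a sub-leading amount. The point is that a typical solution of $\sum_k k\,m_k=n$ has multiplicities obeying the profile $m_k\approx \tfrac{n}{rk}$, so they range across the scales from $\approx n^{2/3}$ (for $k=1$) down to $\approx n^{1/3}$ (for $k\approx r$). For the small multiplicities (indices $k$ with $m_k\approx n^{1/3}$) there are only $\Theta(n^{1/3})$ of them spread over an interval of length $\Theta(n^{1/3})$, so by a birthday-type estimate the chance of no collision is $\ge e^{-Cn^{1/3}}$, while at larger scales the values are automatically separated. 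I would make this rigorous by working in the grand-canonical (product-geometric) ensemble on $(m_1,\dots,m_r)$ with mean tuned so that $\mathbb{E}\sum_k k\,m_k=n$: a local central limit theorem gives $\Pr(\sum_k k\,m_k=n)\asymp V^{-1/2}$ (so that the normalization reproduces $N(n,r)$), and the scale-separation estimate gives $\Pr(\text{all }m_k\text{ distinct}\mid \sum_k k\,m_k=n)\ge e^{-Cn^{1/3}}$. Consequently $\ln N^{=}(n,r)\ge \ln N(n,r)-Cn^{1/3}=(1-o(1))\tfrac{6^{1/3}}{3}n^{1/3}\ln n$, because $n^{1/3}=o\!\left(n^{1/3}\ln n\right)$; combined with $f(n)\ge N^{=}(n,r)$ this yields Lemma~\ref{lowerbound}. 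The technical heart is decoupling the distinctness event from the global sum constraint, for which the independence in the product ensemble together with the local limit theorem is the natural tool.
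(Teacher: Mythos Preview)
Your approach is genuinely different from the paper's, and considerably more elaborate. The paper does not try to hit $\sum_k k\,m_k=n$ on the nose with parts $1,\dots,r$; instead it fixes the multiplicities as $\{2,3,\dots,Kb\}$, restricts to ``blocked'' permutations $(p_1,\dots,p_{Kb})$ of $[1,Kb]$ that send each block $I_j$ to the opposite block $I_{K+1-j}$ (so that $\sum_i i\,p_i<(1-\epsilon)n$ deterministically), and then absorbs the deficit as a \emph{single extra large part} with multiplicity~$1$. This yields $b!^K$ Wilf partitions with $Kb\sim a n^{1/3}$, $a<6^{1/3}$, and the asymptotic follows by letting $a\uparrow 6^{1/3}$. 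The whole argument is elementary---no local CLT, no grand-canonical ensemble, no distinctness estimate---because distinctness of the multiplicities is \emph{built in} (they are literally $1,2,\dots,Kb$) and the sum constraint is handled by the slack part rather than by conditioning.

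Your route can probably be made to work, but the step you correctly flag as the crux is not yet a proof. Two concrete issues. First, the choice $r=\lfloor(6n)^{1/3}\rfloor$ is too tight: the minimum of $\sum_k k\,m_k$ over distinct positive tuples is $\binom{r+2}{3}$, which can exceed $n$ for this $r$, so $N^{=}(n,r)$ may vanish; you need $r\le (6n)^{1/3}-c$ or $r\sim a n^{1/3}$ with $a<6^{1/3}$ and then let $a\uparrow 6^{1/3}$, exactly as the paper does. Second, the ``scale-separation'' claim is heuristic: for independent geometrics with means $\mu_k=n/(rk)$ one computes $\sum_{k<l}\Pr(m_k=m_l)=\Theta(n^{1/3})$, so the Lov\'asz Local Lemma regime is borderline ($p\cdot d\asymp 1$), and passing from the unconditioned ensemble to the conditioned one via a local CLT requires showing that the distinctness event does not concentrate on atypical values of $\sum_k k\,m_k$. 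None of this is implausible, but it is real work, and it is precisely the work the paper's construction avoids. What your approach would buy, if carried out, is a more direct handle on $f(n,r)$ itself rather than just a constructed subfamily; the paper's method trades that for a two-line argument.
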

\begin{proof}
Let $a <  6^{1/3}$, and let $K$ be a fixed large integer.
Let $b:=\lfloor an^{1/3}/K\rfloor$ and
divide the interval $[1,Kb]\subseteq[1, a n^{1/3}]$ into $K$ equal parts
$I_1,\ldots,I_K$. 
Consider only permutations 
$(p_1,\dots,p_{Kb})$ 
of $[1, Kb]$ that map $I_j$ into $I_{K + 1 - j}$ for every $j \in \{1, \dots, K\}$.
For such permutations, if 
$$
a = [6 (1 - 2 \epsilon)]^{1/3},
$$
and if~$K$ is large enough (depending on $\epsilon$ but not on
$n$), then  $\sum_i i p_i < (1-\epsilon)n$, and we obtain
(if~$n$ is large enough) 
a Wilf partition
by taking $i$ parts of size $p_i$ for each $2\le i\le Kb$ and a single part of size
$n - \sum_{i=2}^{Kb} i p_i$.

Thus the number of Wilf partitions of $n$ is at least
$$
b!^K
=\exp\left[ Kb (\ln b + O(1)) \right]
=\exp\left[ a n^{1/3} (\ln n^{1/3} + O(1)) \right]
=\exp\left[ \left( \frac{a}{3} + o(1) \right) n^{1/3} \ln n \right].
$$ 
This completes the proof of Lemma~\ref{lowerbound}, since we may take $a$
arbitrarily close to $6^{1/3}$.
  \end{proof}

\section{Open Problems}
We have found the first-order asymptotic description of
$\ln{f(n)}$, but lower-order terms of $\ln{f(n)}$ remain unknown.
A Herculean task would be to find the first-order asymptotic
description of $f(n)$ itself.
As a much simpler task, 
David~S. Newman has mentioned that it would be nice to have a 
proof that $f(n)$ is nondecreasing.

\subsection{An involution}

\cite{WilfPDF}
mentions a mapping on $T(n)$ in which the roles of
parts and multiplicities are interchanged.  We let $\sigma_{n}$
denote this mapping.  
Hence,
$\sigma_n(({\bf m},{\bf p}))$ is 
$$
({\bf p}_\pi,{\bf m}_\pi)
=((p_{\pi(1)},\dots,p_{\pi(r)}),(m_{\pi(1)},\dots,m_{\pi(r)})),
$$
where~$\pi$ is the permutation making 
$m_{\pi(1)}<\dots<m_{\pi(r)}$.
For instance, two partitions of 83 are
$$
83  = 1 + 1 + 1 + 1 + 1 + 1 + 1 + 4 + 4 + 4 + 4 + 5 + 5 + 5 + 5 + 5 + 5 + 5 +
5 + 5 + 5 + 5 + 5
$$
and
$$
83 = 4 + 4 + 4 + 4 + 7 + 12 + 12 + 12 + 12 + 12.
$$
More succinctly, we can write these partitions as elements of
$T(83)$ as $((7,4,12),(1,4,5))$ and $((4,1,5),(4,7,12))$.
Then we have
$$\sigma_{83} : ((7,4,12),(1,4,5)) \mapsto ((4,1,5),(4,7,12)),$$
and, reversing the roles of the multiplicities and partitions, we have
$$\sigma_{83} : ((4,1,5),(4,7,12)) \mapsto ((7,4,12),(1,4,5)).$$
Switching the roles of the parts and multiplicities again,
we get back to the original partition, so $\sigma_{n}$ is seen to be an
involution.  Note that $\sigma_{n}$ has order~$2$ on most elements of
$T(n)$, but $\sigma_{n}$ has order~$1$ on some elements.
In particular, $\sigma_n$ fixes every $({\bf m},{\bf p})$
for which $m_{i} = p_{i}$ for all $i$, for example, 
$$\sigma_{65} : ((2,5,6),(2,5,6)) \mapsto ((2,5,6),(2,5,6));$$
but there are also other examples, such as 
$$\sigma_{10} : ((1,2,3), (3,2,1)) \mapsto ((1,2,3), (3,2,1)).
$$
It is an open problem to find (asymptotics for) the number of fixed points
of $\sigma_n$.

\section*{Acknowledgements}
Daniel Kane has independently obtained similar results.
We thank David~S. Newman and Vladeta Jovovic
for early conversations about efficient ways to generate
the numbers in Wilf's sequence.
We also thank David~S. Newman for early feedback on the manuscript.
The sequence can be accessed in Neil Sloane's database,
The Online Encyclopedia of Integer Sequences
[\cite{OEIS}]. 
Maciej~I.\ Wilczy{\'n}ski has recently generated the first 500 
values in Wilf's sequence.
The current list of known values of Wilf's sequence is
found at \url{http://oeis.org/A098859/b098859.txt}.


\end{document}